\DeclareMathOperator{\ind}{ind}
\DeclareMathOperator{\Hom}{Hom}
\DeclareMathOperator{\Ext}{Ext}
\DeclareMathOperator{\PP}{\mathcal{P}}
\DeclareMathOperator{\module}{mod}
\begin{document}

\renewcommand{\th}{\operatorname{th}\nolimits}
\newtheorem{lemma}{Lemma}[section]
\newtheorem{prop}[lemma]{Proposition}
\newtheorem{corollary}[lemma]{Corollary}
\newtheorem{theorem}[lemma]{Theorem}
\newtheorem{remark}[lemma]{Remark}
\newtheorem{definition}[lemma]{Definition}
\newtheorem{example}[lemma]{Example}

\title[Categorification of a frieze pattern determinant]{Categorification of a frieze pattern determinant}

\author[Baur]{Karin Baur}
\address{
Institut f\"{u}r Mathematik und wissenschaftliches Rechnen \\
Universit\"{a}t Graz \\ Heinrichstrasse 36 \\ A-8010 Graz \\ Austria}
\email{baurk@uni-graz.at}

\author[Marsh]{Robert J. Marsh}
\address{School of Mathematics \\
University of Leeds \\
Leeds LS2 9JT \\
England
}
\email{marsh@maths.leeds.ac.uk}

\keywords{determinant, minor, triangulation, polygon, permutation, derangement,
cluster algebra, frieze pattern, exchange relation, Pl\"{u}cker relation,
root category, derived category, configuration, starting frame, ending frame,
quiver representation}
\subjclass[2010]{Primary 05B30, 16G20, 18E30; Secondary 05E15, 13F60, 16G70, 52C99}

\begin{abstract}
Broline, Crowe and Isaacs have computed the determinant of a matrix
associated to a Conway-Coxeter frieze pattern. We generalise their result
to the corresponding frieze pattern of cluster variables arising from the
Fomin-Zelevinsky cluster algebra of type $A$. We give a
representation-theoretic interpretation of this result in terms of certain
configurations of indecomposable objects in the root category of type $A$.
\end{abstract}

\thanks{This work was supported by the
Engineering and Physical Sciences Research Council
[grant numbers EP/S35387/01 and EP/G007497/1] and the
Forschungsinstitut f\"{u}r Mathematik (FIM) at the ETH, Z\"{u}rich.
Karin Baur was supported by the Swiss National Science Foundation (grant
PP0022-114794).}

\date{3 February 2012}

\maketitle

\section{Introduction} \label{s:introduction}

Consider a generic $2\times n$ matrix, i.e.\ a matrix
$$X=
\begin{pmatrix}
x_1 & x_2 & \cdots & x_n \\
y_1 & y_2 & \cdots & y_n
\end{pmatrix}
$$
whose entries are indeterminates. For a choice of two columns of $X$,
$1\leq i,j\leq n$, let
$$\Delta_{ij}=\begin{vmatrix} x_i & x_j \\ y_i & y_j \end{vmatrix}$$
be the corresponding minor of $X$ (so $\Delta_{ii}=0$).
Let $A$ be the symmetric matrix with entries $A_{ij}$ given by:
$$A_{ij}=\begin{cases}
\Delta_{ij} & \text{if $i\leq j$;} \\
\Delta_{ji} & \text{if $i>j$.}
\end{cases}$$

Our main result is the following.

\begin{theorem} \label{t:introdetresult}
$$\det(A)=(-2)^{n-2}\Delta_{12}\Delta_{23}\cdots \Delta_{n-1,n}\Delta_{n1}.$$
\end{theorem}

Our motivation comes from a result of Broline, Crowe and Isaacs~\cite{bci}
concerning \emph{frieze patterns} of integers.
Theorem~\ref{t:introdetresult} can be regarded as a generalisation of this result, which we now describe.

Frieze patterns of integers in the plane were considered
in~\cite{coxeter,conwaycoxeter1,conwaycoxeter2} by Conway and Coxeter.
Such a frieze pattern consists of a finite number of infinite rows of
integers, with each row interlacing its neighbouring rows, and satisfies the
{\em unimodular rule},
which states that for every four adjacent numbers forming a square:
$$\begin{matrix}
& b & \\
a & & d \\
& c &
\end{matrix}$$
the relation $ad-bc=1$ is satisfied. The entries in the first and last row
are zero; the entries in the second and penultimate rows are $1$,
and all other entries should be positive.

Fix an integer $n\geq 3$.
A frieze pattern is said to be of {\em order} $n$ if it has $n+1$ rows.
We fix a regular $n$-sided polygon $\PP_n$, with vertices $1,2,\ldots ,n$
numbered in cyclic order, arranged clockwise around the boundary
(we work with the vertices modulo $n$, with representatives in
$\{1,2,\ldots ,n\}$).
In~\cite{conwaycoxeter1,conwaycoxeter2} it is shown that a frieze pattern
can be obtained from a triangulation $\pi$ of $\PP_n$ in the following way.
For each pair of integers $i,j\in \{1,2,\ldots ,n\}$, define an integer $m_{ij}$
as follows.
Set $m_{ii}=0$ and $m_{i,i+1}=1$ for all $i$.
Then let $m_{i-1,i+1}$ be the number of triangles in $\pi$ incident with
vertex $i$. Define $m_{ij}$ for all $i<j$ inductively using the formula
\begin{equation}
\label{e:mijformula}
m_{i-1,j+1}=\frac{m_{i-1,j}m_{i,j+1}-1}{m_{ij}}.
\end{equation}
Then set $m_{ji}=m_{ij}$ for all $i<j$. The numbers $m_{ij}$ are all
positive integers and, when arranged as in Figure~\ref{f:friezepattern},
form a frieze pattern. Furthermore, every frieze pattern of order $n$
arises from a triangulation of $\PP_n$ in this way.

\begin{figure}[htp]
\psfragscanon
\psfrag{0}{$0$}
\psfrag{m12}{$m_{12}$}
\psfrag{m13}{$m_{13}$}
\psfrag{m23}{$m_{23}$}
\psfrag{m1n-1}{$m_{1,n-1}$}
\psfrag{m1n}{$m_{1n}$}
\psfrag{m2n}{$m_{2n}$}
\psfrag{mn-2n-1}{$m_{n-2,n-1}$}
\psfrag{mn-2n}{$m_{n-2,n}$}
\psfrag{mn-1n}{$m_{n-1,n}$}
\includegraphics[width=12cm]{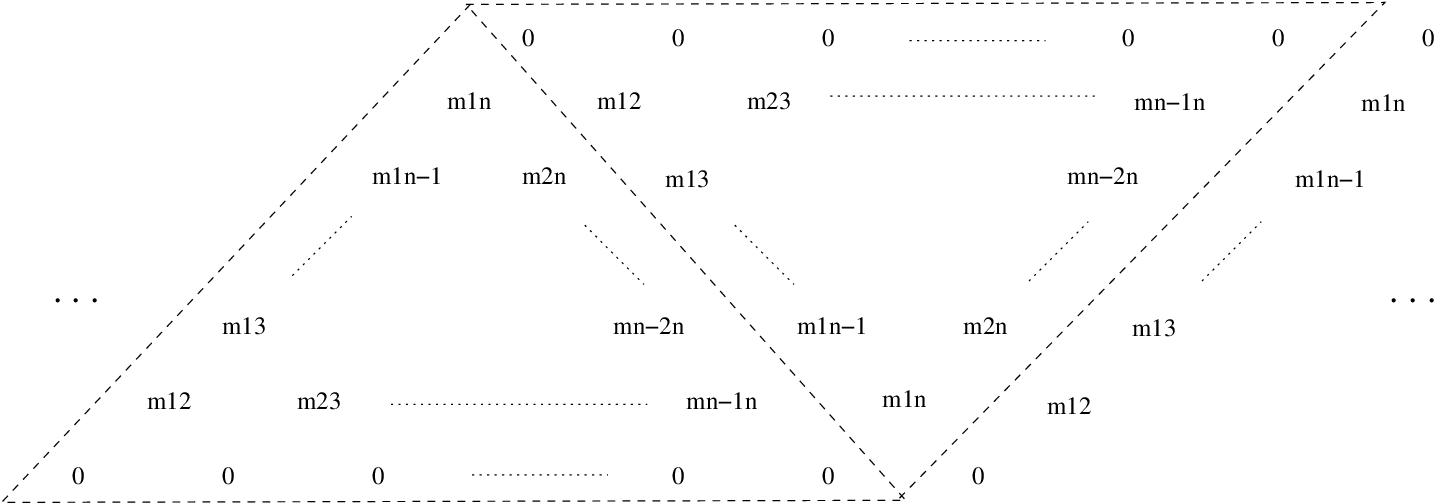}
\caption{Rule for writing out a frieze pattern}
\label{f:friezepattern}
\end{figure}

Note that the entries lying in a triangle below $m_{1,n}$ form a
fundamental domain for a glide reflection preserving the pattern.
This fundamental domain is indicated by a dashed line triangle (together
with its shift to the right).

For example, the frieze pattern corresponding to the triangulation in
Figure~\ref{f:exampletriangulation} is shown in Figure~\ref{f:friezeexample}.
The middle triangle indicates the fundamental domain mentioned above;
its images under the glide reflection and its inverse are also displayed.

\begin{figure}
\includegraphics[width=4cm]{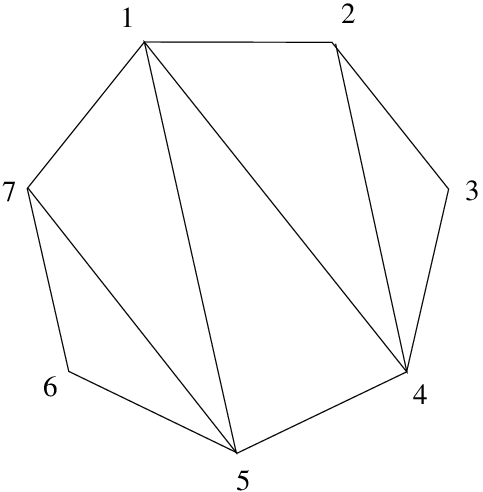}
\caption{A triangulation of $\PP_7$}
\label{f:exampletriangulation}
\end{figure}

\begin{figure}[htp]
\psfragscanon
\psfrag{1}{$1$}
\psfrag{2}{$2$}
\psfrag{3}{$3$}
\psfrag{4}{$4$}
\psfrag{5}{$5$}
\psfrag{6}{$6$}
\psfrag{7}{$7$}
\psfrag{8}{$8$}
\psfrag{9}{$9$}
\includegraphics[width=12cm]{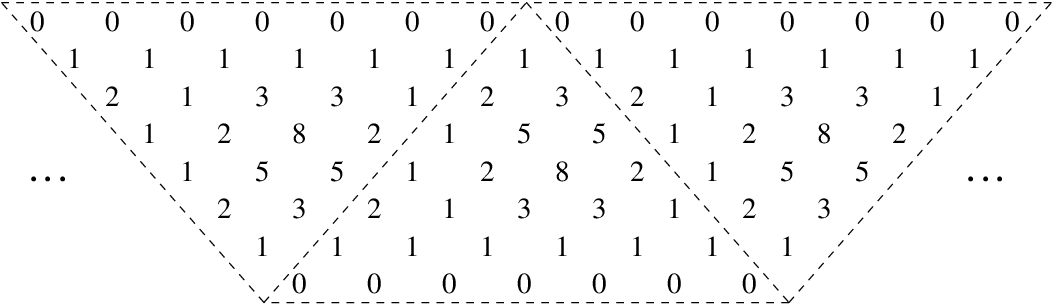}
\caption{The frieze pattern corresponding to the triangulation in Figure~\ref{f:exampletriangulation}.}
\label{f:friezeexample}
\end{figure}

The result of Broline, Crowe and Isaacs can be stated as follows.

\begin{theorem} \cite{bci} \label{t:bciresult}
Let $n\geq 3$, let $\pi$ be a triangulation of $\PP_n$ and let
$M(\pi)=(m_{ij})$ be the symmetric matrix defined above.
Then $\det(M)=-(-2)^{n-2}$.
\end{theorem}

For the example in Figure~\ref{f:exampletriangulation}, we have
$$M(\pi)=
\begin{pmatrix}
0&1&2&1&1&2&1 \\
1&0&1&1&2&5&3 \\
2&1&0&1&3&8&5 \\
1&1&1&0&1&3&2 \\
1&2&3&1&0&1&1 \\
2&5&8&3&1&0&1 \\
1&3&5&2&1&1&0
\end{pmatrix}
$$
which has determinant $-(-2)^5=32$.
In addition, a geometric interpretation of all of the entries in the
frieze pattern in terms of $\pi$ is given in~\cite{bci}.

We note that frieze patterns of integers of various kinds have been studied
recently; see, for
example~\cite{ars,baurmarsh2,bergeronreutenauer,guo,kellerscherotzke,moriergenoud,mot,propp}.

A connection between cluster algebras and frieze patterns was established
in the article~\cite{calderochapoton},
which showed that the frieze patterns above can be obtained from cluster
algebras of type $A$.

The homogeneous coordinate ring of the Grassmannian of $2$-planes in a
$n$-dimensional vector space is a cluster algebra of type
$A_{n-3}$~\cite[12.6]{fominzelevinsky2} (see also~\cite[\S1]{fominzelevinsky1}).
The cluster variables are in bijection with the diagonals of $\PP_n$.
If $i,j$ are the end-points of such a diagonal, we write $u_{ij}$ for
the corresponding cluster variable (so $u_{ij}=u_{ji}$).
It has stable variables $u_{ij}$ where $i,j$ are the end-points of a boundary
edge. We set $u_{ii}=0$ for all $i$ and $u_{ij}=u_{ji}$ for $i>j$.

Via the above bijection, the clusters are in bijection with the triangulations
of $\PP_n$. We fix such a triangulation $\pi$ and corresponding cluster.
By the Laurent Phenomenon~\cite[3.1]{fominzelevinsky1},
each cluster variable can be written as a Laurent polynomial in the
elements of the cluster with the coefficients of the polynomial given by
polynomials in the stable variables.

When the variables in the cluster and the stable variables are
specialised to $1$, the resulting integers, when arranged correctly,
produce the corresponding frieze pattern.

Theorem~\ref{t:introdetresult} can be reinterpreted in terms of this cluster
algebra as follows. In this way we see that it is in fact a generalisation of
Theorem~\ref{t:bciresult}.

\begin{theorem} \label{t:introclusterinterpretation}
Let $\pi$ be a triangulation of $\PP_n$. Let $U(\pi)=(u_{ij})$
Then
$$\det(U(\pi))=-(-2)^{n-2}u_{12}u_{23}\cdots u_{n-1,n}u_{n1}.$$
\end{theorem}

We go on to show that this result can be given a categorical
interpretation in terms of the root category of type $A_{n-1}$. By interpreting
this category as a category of oriented edges between vertices of $\PP_n$ (using
methods similar to those in~\cite{ccs}) we show that the above determinant
can be reinterpreted as a sum over configurations of indecomposable
objects in the root category. Each configuration is a maximal collection
of indecomposable objects such that no object lies in the frame of the
other (see Section~\ref{s:frames} for the definition of frame) which is also
of maximal cardinality.

\section{A determinantal result}
In this section, we prove our main result:
\begin{theorem} \label{t:detresult}
Let $A=(A_{ij})$ be the matrix of minors of $X$ defined above.
Then
$$\det(A)=(-2)^{n-2}\Delta_{12}\Delta_{23}\cdots \Delta_{n-1,n}\Delta_{n1}.$$
\end{theorem}
\begin{proof}
The key point is that the minors $\Delta_{ij}$ satisfy the Pl\"{u}cker
relations, i.e., whenever $1\leq p<q<r<s\leq n$ are vertices of $\PP_n$, we have:
\begin{equation}
\Delta_{pq}\Delta_{rs}+\Delta_{qr}\Delta_{ps}=\Delta_{pr}\Delta_{qs}.
\end{equation}
Noting that $A$ is symmetric, it follows that, whenever $i,j,k,l$ are arranged
clockwise around the boundary of $\PP_n$, we have:
\begin{equation} \label{e:Arelations}
A_{ij}A_{kl}+A_{jk}A_{il}=A_{ik}A_{jl}.
\end{equation}

We use induction on $n$, showing that the Pl\"{u}cker relations are sufficient
to imply the result.
For $n=3$ we have:
$$A=
\begin{pmatrix}
0 & \Delta_{12} & \Delta_{13} \\
\Delta_{12} & 0 & \Delta_{23} \\
\Delta_{13} & \Delta_{23} & 0
\end{pmatrix}
$$
which has determinant $-2\Delta_{12}\Delta_{23}\Delta_{31}$ as required.
Now suppose that $n\geq 4$ and that the result is true for $n-1$.
Fix vertices $a,b$ of $\PP_n$ such that $b$ is distinct from
$a-1,a,a+1$. From the quadrilateral in Figure~\ref{specialquad}
we have the Pl\"{u}cker relation:
$$A_{a-1,a}A_{a+1,b}+A_{a,a+1}A_{a-1,b}=A_{a-1,a+1}A_{ab}.$$

\begin{figure}
\begin{center}
\includegraphics{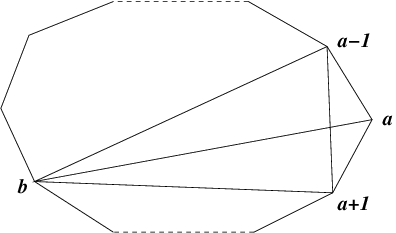} 
\caption{A quadrilateral in $\PP_n$}
\label{specialquad}
\end{center}
\end{figure}

We note that $A_{a-1,a+1}\not=0$. Thus it follows from the above that:
$$\frac{A_{a-1,a}}{A_{a-1,a+1}}A_{a+1,b}+\frac{A_{a,a+1}}{A_{a-1,a+1}}A_{a-1,b}=A_{ab}.$$
Let $R_a$ denote the $a$th row of $A$.
From the above, it follows that if we replace $R_a$ with the row:
$$R_a-\frac{A_{a-1,a}}{A_{a-1,a+1}}R_{a+1}-\frac{A_{a,a+1}}{A_{a-1,a+1}}R_{a-1},$$
then the $a,b$ entry $A'_{ab}$ in the new matrix $A'$ will be $0$ for any
$b\neq a-1,a$ or $a+1$. Noting that $A_{a-1,a-1}=0$, we also have:
$$A'_{a,a-1}=A_{a,a-1}-\frac{A_{a-1,a}}{A_{a-1,a+1}}A_{a+1,a-1}-
\frac{A_{a,a+1}}{A_{a-1,a+1}}A_{a-1,a-1}=0,$$
and, similarly, $A'_{a,a+1}=0$, while
$$A'_{aa}=A_{aa}-\frac{A_{a-1,a}}{A_{a-1,a+1}}A_{a+1,a}-\frac{A_{a,a+1}}{A_{a-1,a+1}}A_{a-1,a}=\frac{-2A_{a-1,a}A_{a,a+1}}{A_{a-1,a+1}}.$$
Expanding along the $a$th row, we obtain that the determinant of $A$ is
\begin{equation}\label{indstep}
\det(A)=(-1)^{2a}\frac{(-2)\Delta_{a-1,a}\Delta_{a,a+1}}{\Delta_{a-1,a+1}}
\det(A(a)),
\end{equation}
where $A(a)$ denotes the matrix obtained by removing the $a$th row and the
$a$th column from $A$. Note that the entries of $A(a)$ satisfy the
relations~\eqref{e:Arelations} for the polygon $\PP_{n-1}$ with vertices
parametrised clockwise using the numbers $\{1,2,\ldots ,n\}\setminus \{a\}$.
By the induction hypothesis,
$$\det(A(a))=(-2)^{n-3}\Delta_{a-1,a+1}\Delta_{12}\Delta_{23}\cdots \hat{\Delta}_{a-1,a}\hat{\Delta}_{a,a+1}\cdots \Delta_{n-1,n}\Delta_{n1},$$
where the hats indicate omission.
Combining this with equation \eqref{indstep} we obtain that
$$\det(A)=(-2)^{n-2}\Delta_{12}\Delta_{23}\cdots \Delta_{n-1,n}\Delta_{n1},$$
as required.
\end{proof}

Note that the identity in Theorem~\ref{t:detresult} can be restated as:
\begin{equation} \label{e:rewritten}
\det(A)=-(-2)^{n-2}A_{12}A_{23}\cdots A_{n-1,n}A_{n1}
\end{equation}
since $A_{n1}=\Delta_{1n}=-\Delta_{n1}$.

\section{Frieze patterns and cluster algebras}

We consider a cluster algebra $\mathcal{A}$ of type $A_{n-3}$
associated to $\PP_n$, defined over the complex numbers.
This cluster algebra appears
in~\cite[\S1]{fominzelevinsky1}
and is described in detail in~\cite[12.2]{fominzelevinsky3}; see
alternatively~\cite[3.2]{propp} (using a perfect matching model for frieze
patterns due to Gabriel Carroll and Gregory Price; see [loc. cit.] for details).

Let $\pi$ be a triangulation of $\PP_n$ and let $\mathbb{F}$ be the field
of rational functions in the variables $u_{ij}$ where $i,j$ are the end-points
of a diagonal in $\pi$ or a boundary edge of $\PP_n$ (regarding $u_{ij}$ and $u_{ji}$
as equal). Define elements $u_{ij}\in \mathbb{F}$, for $i,j$ the end-points of an arbitrary
diagonal of $\PP_n$, inductively as follows (again with $u_{ij}=u_{ji}$). If $i,j,k,l$
are vertices of $\PP_n$, arranged clockwise around the boundary, and $u_{ij},u_{jk},u_{kl},u_{li}$
and $u_{ik}$ are defined but $u_{jl}$ is not, define $u_{jl}$ by the following
\emph{exchange relation}:
\begin{equation} \label{e:exchange}
u_{ij}u_{kl}+u_{jk}u_{il}=u_{ik}u_{jl}.
\end{equation}
It turns out that the elements $u_{ij}$ are well-defined. The cluster algebra $\mathcal{A}$
is the $\mathbb{C}$-subalgebra of $\mathbb{F}$ generated by the $u_{ij}$ for
$i,j$ the end-points of any diagonal or boundary edge of $\PP_n$.
The generators corresponding to diagonals are known as \emph{cluster
variables} and those corresponding to boundary edges are known as
\emph{stable variables}.
For $n=3$ there are no cluster variables, only stable variables.
The exchange relation~\eqref{e:exchange} holds for any choice of
$i,j,k,l$ arranged clockwise around the boundary of $\PP_n$.

The diagonals in a triangulation of $\PP_n$ determine a corresponding set
of cluster variables of $\mathcal{A}$ known as a \emph{cluster}. The
cluster corresponding to $\pi$ is the \emph{initial cluster}.
By~\cite[3.1]{fominzelevinsky1}, the cluster variables can be
written as Laurent polynomials in any fixed cluster, with coefficients given
by polynomials in the stable variables.

The cluster algebra $\mathcal{A}$ is independent (up to isomorphism)
of the choice of $\pi$. In fact, it is isomorphic to the homogeneous coordinate
ring $\mathbb{C}[Gr_2(n)]$ of the Grassmannian of $2$-dimensional subspaces
of an $n$-dimensional vector space; see~\cite[12.6]{fominzelevinsky2}.
Such a subspace can be described by a $2\times n$ matrix, with rows given by a
choice of spanning vectors, and the $2\times 2$ minors of the matrix give
homogeneous coordinates on the Grassmannian.

The homogeneous coordinate ring is generated by these minors, subject to
the Pl\"{u}cker relations, and under the isomorphism, $u_{ij}$, for $i<j$,
maps to the minor associated to columns $i$ and $j$ of the matrix.
The exchange relations above map to the Pl\"{u}cker relations.

Fix a triangulation $\pi$ of $\PP_n$.
Setting $u_{ii}=0$ for all $i$, we consider the symmetric matrix
$U(\pi)=(u_{ij})$, where the $u_{ij}$ are regarded as Laurent polynomials in the
$u_{ij}$ for $i,j$ the end-points of an edge in $\pi$
with coefficients given by polynomials in the stable variables.

We note the following:
\begin{prop} \cite[5.2]{calderochapoton} (see also~\cite[\S2,\S3]{propp})
\label{p:specialisation}
If the $u_{ij}$, for $i,j$ the end-points of a diagonal
in $\pi$, and the stable variables are all specialised to $1$,
then $U(\pi)$ becomes the matrix $M(\pi)$ defined above.
\end{prop}

Caldero-Chapoton prove this by first showing that, for each $i$,
$u_{i-1,i+1}$ specialises to the number of triangles in $\pi$ incident with
vertex $i$, which coincides with $m_{i-1,i+1}$ in the above definition.
The result then follows from a comparison of formulas~\eqref{e:mijformula}
and~\eqref{e:exchange}.

We can now restate Theorem~\ref{t:detresult} in this context. The proof of
this version is the same, using the exchange relations~\eqref{e:exchange}
(and noting equation~\eqref{e:rewritten}).

\begin{theorem} \label{t:clusterinterpretation}
Let $\pi$ be a triangulation of $\PP_n$. Let $U(\pi)=(u_{ij})$ be the
matrix defined above. Then
$$\det(U(\pi))=-(-2)^{n-2}u_{12}u_{23}\cdots u_{n-1,n}u_{n1}.$$
\end{theorem}

We have thus interpreted Theorem~\ref{t:detresult} as a generalisation of
Theorem~\ref{t:bciresult}.

\section{An Example}
Here we give an example of the result in the previous section.
Let $\pi$ be the triangulation of a pentagon shown in Figure~\ref{pentagon}.
\begin{figure}
\begin{center}
\includegraphics{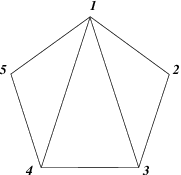} 
\caption{A triangulation of $\PP_5$.}
\label{pentagon}
\end{center}
\end{figure}
Then the corresponding matrix is given by:
$$U(\pi)=
\begin{pmatrix}
0 & u_{12} & u_{13} & u_{14} & u_{15} \\
u_{12} & 0 & u_{23} & \frac{u_{12}u_{34}+u_{23}u_{14}}{u_{13}} &
v \\u_{13} & u_{23} & 0 & u_{34} & \frac{u_{13}u_{45}+u_{15}u_{34}}{u_{14}} \\
u_{14} & \frac{u_{12}u_{34}+u_{23}u_{14}}{u_{13}} &
u_{34} & 0 & u_{45} \\
u_{15} & v & \frac{u_{13}u_{45}+u_{15}u_{34}}{u_{14}} & u_{45} & 0
\end{pmatrix}
$$
where
$$v=\frac{u_{12}u_{13}u_{45}+u_{12}u_{15}u_{34}+u_{14}u_{15}u_{23}}{u_{13}u_{14}}.$$

By Theorem~\ref{t:clusterinterpretation}, we have that
$$\det(u)=8u_{12}u_{23}u_{34}u_{45}u_{51}.$$

\section{A geometric model of the root category}

Let $n\geq 3$ be an integer and let $Q$ be a quiver of type $A_{n-1}$. Let $k$ be
an algebraically closed field. Let $D^b(kQ)$ denote the bounded derived
category of modules over $kQ$, with shift functor $[1]$. Let
$\mathcal{R}_n=D^b(kQ)/[2]$ denote the quotient of $D^b(kQ)$ by the square $[2]$
of the shift. In this section we shall exhibit a geometric
construction of this category (along the lines of~\cite{ccs}).
We remark that this category is sometimes referred to as the
\emph{root category} (of type $A$) since its objects can be put into one-to-one
correspondence with the roots in the corresponding root system (by Gabriel's
Theorem). It was considered in~\cite{happel1}.

We now consider \emph{oriented} edges between vertices of $\PP_n$, denoting the
edge oriented from $i$ to $j$ by $[i,j]$, for any $1\leq i,j\leq n$ with
$i\neq j$ (thus boundary edges are included).

Recall that a \emph{stable translation quiver} is a pair
$(\Gamma,\tau)$ where $\Gamma$ is a locally finite quiver and
$\tau:\Gamma_0\rightarrow \Gamma_0$
is a bijection such that for any $X,Y\in \Gamma_0$, the
number of arrows from $X$ to $Y$ is the same as the number of arrows from
$\tau(Y)$ to $X$.

Let $\Gamma=\Gamma(n)$ be the quiver defined as follows. The set of
vertices, $\Gamma_0$, is the set of all possible oriented edges between
distinct vertices of $\PP_n$ as above.
The arrows, $\Gamma_1$, are of the form $[i,j]\rightarrow [i,j+1]$ and
$[i,j]\rightarrow [i+1,j]$ (where $j+1$ is interpreted as $1$ if $j=n+1$
and similarly for $i+1$), whenever
$[i,j]$ and $[i,j+1]$ (respectively, $[i,j]$ and $[i+1,j]$)
are vertices of $\Gamma$.
Thus an arrow comes from rotating an oriented edge
clockwise about one of its end-points so that the other end-point moves to
an adjacent vertex on the boundary of $\PP_n$.

Let $\tau$ be the automorphism of $\Gamma$ obtained by rotating $\PP_n$
through $2\pi/n$ anticlockwise; thus $\tau([i,j])=[i-1,j-1]$.

\begin{lemma}
The pair $(\Gamma,\tau)$ is a stable translation quiver.
\end{lemma}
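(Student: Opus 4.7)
The plan is to verify the three conditions defining a stable translation quiver directly from the construction. Local finiteness of $\G$ is immediate: each vertex $[i,j]$ admits at most two outgoing arrows (one of each type), and dually at most two incoming arrows, so every vertex has finitely many neighbours. Next, the map $\tau([i,j])=[i-1,j-1]$ is a bijection on $\G_0$: it preserves the condition $i\neq j$ distinguishing vertices of $\G$, and has two-sided inverse $[i,j]\mapsto[i+1,j+1]$. In particular $\G'_0=\G_0$, reducing the question to the arrow-counting identity.

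The substantive step is to show that for all $X,Y\in\G_0$, the number of arrows $X\to Y$ equals the number of arrows $\tau(Y)\to X$. The key simplifying observation is that there is at most one arrow between any two vertices of $\G$, since the two arrow types out of a fixed vertex have distinct targets. So I would prove instead that an arrow $X\to Y$ exists if and only if an arrow $\tau(Y)\to X$ exists. Writing $X=[i,j]$, the two possibilities for $Y$ are $[i,j+1]$ and $[i+1,j]$. In the first case $\tau(Y)=[i-1,j]$ and the required matching arrow $\tau(Y)\to X$ is the type II arrow $[i-1,j]\to[i,j]$; in the second case $\tau(Y)=[i,j-1]$ and the matching arrow is the type I arrow $[i,j-1]\to[i,j]$. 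Reversing the analysis (starting from an arrow $\tau(Y)\to X$ and recovering $Y$ from its source) supplies the converse.

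The main obstacle, and indeed the only subtle point, is bookkeeping the edge cases in which a proposed arrow would degenerate to a loop at a boundary vertex of $\P_n$; this is precisely the situation excluded by the proviso ``whenever $[i,j+1]$ (respectively $[i+1,j]$) is a vertex of $\G$'' in the definition of $\G_1$. What remains to be checked is that this proviso is preserved under the correspondence above, i.e.\ that on each side the existence condition amounts to $X,Y\in\G_0$. This is a short calculation mod $n$ which I expect to be routine, and once it is in hand the four cases combine to give the bijection between arrows $X\to Y$ and arrows $\tau(Y)\to X$, completing the proof.
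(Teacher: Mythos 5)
Your proof is correct and is exactly the direct verification that the paper delegates to the citation of \cite[2.2]{baurmarsh1}: bijectivity of $\tau$, at most one arrow between any two vertices, and the matching of the arrow $[i,j]\to[i,j+1]$ (resp.\ $[i,j]\to[i+1,j]$) with $[i-1,j]\to[i,j]$ (resp.\ $[i,j-1]\to[i,j]$). The ``routine calculation mod $n$'' you defer is indeed immediate, since the proviso for $[i,j]\to[i,j+1]$ is $i\neq j+1$, which is the same as the condition $i-1\neq j$ for $\tau(Y)=[i-1,j]$ to be a vertex, and the matching arrow then lands on $X$, which is a vertex by hypothesis.
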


The proof is as in~\cite[2.2]{baurmarsh1}: note that this proof also works
in the oriented case we have here.

\begin{example}
We consider the case when $n=5$, so $\PP_n$ is a pentagon. The translation
quiver $(\Gamma(5),\tau)$ is given in Figure~\ref{f:gamma4}.

\begin{figure}
$$\xymatrix@C=0.4cm{
&&& 15 \ar@{--}[rr] \ar[dr] && 21 \ar@{--}[rr] \ar[dr] && 32 \ar@{--}[rr] \ar[dr] && 43 \ar@{--}[rr] \ar[dr] && 54 \ar@{--}[rr] \ar[dr] && 15 \\
&& 14 \ar@{--}[rr] \ar[ur] \ar[dr] && 25 \ar@{--}[rr] \ar[ur] \ar[dr] && 31 \ar@{--}[rr] \ar[ur] \ar[dr] && 42 \ar@{--}[rr] \ar[dr] \ar[ur] && 53 \ar@{--}[rr] \ar[ur] \ar[dr]&& 14 \ar[ur] \\
& 13 \ar@{--}[rr] \ar[ur] \ar[dr] && 24 \ar@{--}[rr] \ar[ur] \ar[dr] && 35 \ar@{--}[rr] \ar[ur] \ar[dr] && 41 \ar@{--}[rr] \ar[ur] \ar[dr] && 52 \ar@{--}[rr] \ar[ur] \ar[dr] && 13 \ar[ur] \\
12 \ar@{--}[rr] \ar[ur] && 23 \ar@{--}[rr] \ar[ur] && 34 \ar@{--}[rr] \ar[ur]  && 45 \ar@{--}[rr] \ar[ur] && 51 \ar@{--}[rr] \ar[ur] && 12 \ar[ur] \\
}$$
\caption{The translation quiver $\Gamma(5)$.}
\label{f:gamma4}
\end{figure}
\end{example}

By~\cite[2.3]{pengxiao} (see also~\cite[9.9]{keller}), $\mathcal{R}_n$ is a triangulated category, and,
by~\cite[1.3]{bmrrt}, it has Auslander-Reiten triangles and its Auslander-Reiten
quiver, $\Gamma(\mathcal{R}_n)$ is the quotient of the Auslander-Reiten quiver
of $D^b(kQ)$ by the automorphism induced by $[2]$.
We have the following:

\begin{prop} \label{p:rootcatgamma}
The translation quiver $\Gamma(n)$ is isomorphic to $\Gamma(\mathcal{R}_n)$.
\end{prop}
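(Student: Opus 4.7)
The plan is to construct an explicit isomorphism $\Phi\colon \Gamma(n)\to \Gamma(\mathcal{R}_n)$ of translation quivers, following the strategy used in~\cite{ccs} for the cluster category and in~\cite[\S2]{baurmarsh1} for the analogous unoriented case, but with the extra bookkeeping needed to accommodate orientations on edges. Since $\Gamma(\mathcal{R}_n)$ is the Auslander--Reiten quiver of $\mathcal{R}_n$, it is enough to biject its vertices with the isomorphism classes of indecomposable objects in $\mathcal{R}_n$, match the combinatorial $\tau$ with the AR translate, and match the arrows of $\Gamma(n)$ with irreducible morphisms in $\mathcal{R}_n$.

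For the vertex bijection, Gabriel's theorem puts the indecomposable $kQ$-modules in bijection with the positive roots of $A_{n-1}$, hence with unoriented chords $\{i,j\}$ of $\P_n$ for $1\leq i<j\leq n$. Indecomposables of $D^b(kQ)$ are shifts $M[k]$ of these, and in the quotient $\mathcal{R}_n=D^b(kQ)/[2]$ every indecomposable is represented either by a module $M$ or by $M[1]$. I would send $M\leftrightarrow [i,j]$ (with $i<j$) and $M[1]\leftrightarrow [j,i]$; both sides then have $n(n-1)$ elements. The AR translate of $D^b(kQ)$ is $\tau_D=\nu\circ[-1]$ (with $\nu$ the Nakayama functor), and it descends to $\mathcal{R}_n$. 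A direct computation, essentially the one already performed in~\cite[2.2]{baurmarsh1}, shows that under $\Phi$ this $\tau_D$ rotates an oriented chord one step anticlockwise, i.e.\ $[i,j]\mapsto[i-1,j-1]$, matching the $\tau$ on $\Gamma(n)$.

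For the arrows, the irreducible morphisms in $\mod kQ$ of type $A_{n-1}$ either extend or shorten an interval at a single end, which in the chord model amounts to rotating one endpoint of the chord by a single step on the boundary of $\P_n$; these account for the arrows of $\Gamma(n)$ that stay inside the ``module part'' and (via $[1]$) inside the ``shifted module part''. The AR triangles for the simple projectives (and injectives) supply the remaining irreducible morphisms, which cross between modules and their shifts in $\mathcal{R}_n$; under $\Phi$ these correspond exactly to the arrows $[i,j]\to[i,j+1]$ and $[i,j]\to[i+1,j]$ that cross between the two regions. The main obstacle is precisely this crossing-over bookkeeping: the orientation convention in the vertex bijection has to be chosen so that the AR-triangle arrows land on the correctly oriented edges. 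A clean way to carry this out, which I would use to check the convention, is to first identify $\Gamma(D^b(kQ))$ with $\Z A_{n-1}$ via Happel's theorem, recognise this as the universal cover of $\Gamma(n)$ with deck group generated by $\tau^n$, and then quotient both sides by $[2]$, reducing the verification to a single compatible fundamental domain.
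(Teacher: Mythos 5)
Your proposal is correct and follows essentially the same route as the paper: the same vertex bijection $M_{ij}\leftrightarrow[i,j]$ for $i<j$ and $M_{ij}[1]\leftrightarrow[j,i]$, reliance on Happel's description of the Auslander--Reiten quiver of $D^b(kQ)$ as $\Z A_{n-1}$, and a final check concentrated on the arrows that cross between the module part and its shift. The only cosmetic difference is that the paper performs this last check by writing out the two non-trivial meshes explicitly (those starting at $M_{i,n}=I_i$ and at $M_{n,i}=I_i[1]$), whereas you would verify the same thing on a fundamental domain for the covering $\Z A_{n-1}\to\Gamma(n)$; both are valid.
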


\begin{proof}
Suppose that $Q$ is a linearly oriented quiver of type $A_{n-1}$, with arrows
$i\leftarrow i+1$, $1\leq i\leq n-2$. Then, up to isomorphism, the
indecomposable modules
for $kQ$ are of the form $M_{ij}$, $1\leq i<j\leq n$, where
$M_{ij}$ has socle $S_i$ (the simple module corresponding to vertex $i$) and
length $j-i$. So $M_{i,i+1}=S_i$. For $i<j$, we also set $M_{ji}=M_{ij}[1]$.
Then the map $[i,j]\mapsto M_{ij}$, for $1\leq i,j\leq n$, $i\neq j$,
gives a bijection between oriented edges between vertices of $\PP_n$ and
isomorphism
classes of indecomposable objects of $\mathcal{R}_n$. The fact that this is an
isomorphism of translation quivers follows from the description of the
Auslander-Reiten quiver of $D^b(kQ)$ in~\cite{happel2}.
We just need to check that the mesh beginning at corresponding vertices is
the same in each quiver.
The only non-trivial cases are the meshes beginnning with $M_{ij}$ where
$j=n$ or $i=n$. In the first case, the mesh in $\Gamma(\mathcal{R}_n)$ is:
$$\xymatrix{
&M_{i,1}=P_{i-1}[1] \ar[dr] & \\
M_{i,n}=I_i \ar[ur] \ar[dr] \ar@{--}[rr] && M_{i+1,1}[1]=P_i[1] \\
& M_{i+1,n}=I_{i+1} \ar[ur] &
},$$
and in the second case, the mesh in $\Gamma(\mathcal{R}_n)$ is:
$$\xymatrix{
&M_{n,i+1}=I_{i+1}[1] \ar[dr] & \\
M_{n,i}=I_i[1] \ar[ur] \ar[dr] \ar@{--}[rr] && M_{1,i+1}=P_i \\
& M_{1,i}=P_{i-1} \ar[ur] &
},$$
noting that in $\mathcal{R}_n$, $X\cong X[2]$ for any object $X$.
These meshes are the images of the corresponding meshes in $\Gamma(n)$,
so we are done.
\end{proof}

\begin{remark}
We note that the induced subquiver of $\Gamma(n)$ on vertices of form $[i,j]$
with $i<j$ (with $\tau([i,j])$ undefined if $i=1$) is isomorphic to the
Auslander-Reiten quiver of $kQ-\module$.
\end{remark}

We note that, as for the cluster category (see~\cite[\S1]{bmrrt}), the
category $\mathcal{R}_n$ is standard. We thus have the following corollary
of Proposition~\ref{p:rootcatgamma}, giving a geometric realisation of
$\mathcal{R}_n$.

\begin{corollary}
The root category $\mathcal{R}_n$ is equivalent to the additive hull of the mesh category of
$\Gamma(n)$.
\end{corollary}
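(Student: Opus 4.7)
The plan is to combine Proposition~\ref{p:rootcatgamma} with the standardness statement that has just been asserted in the text. Recall that a $k$-linear Krull--Schmidt category $\mathcal{C}$ with Auslander--Reiten triangles is \emph{standard} when it is $k$-linearly equivalent to the additive hull of the mesh category of its Auslander--Reiten quiver; equivalently, the composition law in $\mathcal{C}$ is determined, up to equivalence, by the AR translation and irreducible morphisms together with the mesh relations. Thus once standardness of $\mathcal{R}_n$ is in hand, the corollary will follow by transporting the identification $\Gamma(\mathcal{R}_n)\cong\Gamma(n)$ of Proposition~\ref{p:rootcatgamma} through the equivalence.

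The first step is to justify that $\mathcal{R}_n$ is indeed standard. The bounded derived category $D^b(kQ)$ for a Dynkin quiver $Q$ is standard by Happel's description (cited in the proof of Proposition~\ref{p:rootcatgamma} via~\cite{happel2}): its AR quiver is $\mathbb{Z}Q$ and $D^b(kQ)$ is equivalent to the additive hull of the mesh category of $\mathbb{Z}Q$. The quotient $\mathcal{R}_n=D^b(kQ)/[2]$ is an orbit category under a free action on the objects (as recorded in the proof of Proposition~\ref{p:rootcatgamma}, where $[2]$ acts by the identification $M_{ij}\cong M_{ij}[2]$). Standardness passes to such orbit categories by exactly the argument used for the cluster category in~\cite[\S1]{bmrrt}, which is the reference the paper invokes just before the corollary; this is the only point where some care is needed, and will be the main obstacle of the proof in the sense that one needs to observe that the mesh relations defining the mesh category of $\Gamma(\mathcal{R}_n)$ lift to mesh relations in $\Gamma(D^b(kQ))=\mathbb{Z}Q$, so a covering-functor argument yields a $k$-linear equivalence between $\mathcal{R}_n$ and the additive hull of the mesh category of $\Gamma(\mathcal{R}_n)$.

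Having established standardness, the second (formal) step is to combine it with Proposition~\ref{p:rootcatgamma}. The proposition produces an isomorphism of translation quivers $\Gamma(n)\xrightarrow{\sim}\Gamma(\mathcal{R}_n)$ sending $[i,j]\mapsto M_{ij}$, and such an isomorphism induces an isomorphism of the corresponding mesh categories, hence of their additive hulls. Composing this with the standardness equivalence $\mathcal{R}_n\simeq\mathrm{add}(\text{mesh category of }\Gamma(\mathcal{R}_n))$ gives the desired equivalence $\mathcal{R}_n\simeq\mathrm{add}(\text{mesh category of }\Gamma(n))$, completing the proof. No new computations are required beyond checking that the translation-quiver isomorphism of Proposition~\ref{p:rootcatgamma} is indeed compatible with the arrows on both sides, which was already verified in its proof by a case analysis on the meshes ending in indices involving $n$.
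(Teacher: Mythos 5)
Your proposal is correct and follows essentially the same route as the paper: the paper simply notes that $\mathcal{R}_n$ is standard by the same argument as for the cluster category in~\cite[\S1]{bmrrt} and then deduces the corollary from Proposition~\ref{p:rootcatgamma}. Your additional detail on why standardness descends from $D^b(kQ)$ to the orbit category is a faithful expansion of exactly that argument, not a different approach.
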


We shall identify indecomposable objects in $\mathcal{R}_n$, up to
isomorphism, with the
corresponding oriented edges between vertices of $\PP_n$, in the sequel,
and we shall freely switch between objects and oriented edges between vertices in
$\PP_n$.

\section{Dimensions of extension groups}

In this section we indicate how the dimensions of $\Ext^1$-groups between
indecomposable objects of $\mathcal{R}_n$ can be read off from the geometric
model. We fix $i,j$ with $1\leq i,j\leq n$, $i\neq j$,
and consider the corresponding indecomposable object $[i,j]$.

Consider the two rectangles
$R_B=R_B(i,j)$, with corners $[j,i]$, $[j,j-1]$, $[i-1,j-1]$ and $[i-1,i]$,
and $R_F=R_F(i,j)$, with corners
$[i+1,j+1]$, $[i+1,i]$, $[j,i]$ and $[j,j+1]$, in $\mathcal{R}_n$.

Note that the Auslander-Reiten formula holds in $D^b(kQ)$.
This states that, for two indecomposable objects $X,Y$ in $D^b(kQ)$,
$$\Ext^1_{D^b(kQ)}(X,Y)\cong D\Hom_{D^b(kQ)}(Y,\tau X),$$
where $D=\Hom_k(-,k)$. This passes down to the root category $\mathcal{R}_n$
and it follows that the dimensions of $\Ext^1_{\mathcal{R}_n}(X,Y)$ and
$\Hom_{\mathcal{R}_n}(Y,\tau X)$ coincide for any two indecomposable
objects $X,Y$ in $\mathcal{R}_n$.

Using~\cite{bongartz} or the mesh relations and the Auslander-Reiten
formula in $\mathcal{R}_n$ directly, we see that:

\begin{lemma} \label{l:originalextrule}
Let $X$ and $Y$ be indecomposable objects in $\mathcal{R}_n$. Then:
\begin{enumerate}
\item[(a)]
The space $\Ext^1_{\mathcal{R}_n}([i,j],Y)$ is non-zero if and only if $Y$ lies in $R_B$.
If it is non-zero then it is one-dimensional.
\item[(b)]
The space $\Ext^1_{\mathcal{R}_n}(X,[i,j])$ is non-zero if and only if $X$ lies in $R_F$.
If it is non-zero then it is one-dimensional.
\end{enumerate}
\end{lemma}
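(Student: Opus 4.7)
The plan is to reduce the extension computation to a $\Hom$ computation in the mesh category via the Auslander--Reiten formula, and then to read off the vanishing pattern directly from the combinatorics of $\Gamma(n)$. Since $\mathcal{R}_n$ is triangulated with Auslander--Reiten translate $\tau([i,j])=[i-1,j-1]$ (Proposition~\ref{p:rootcatgamma}), the AR formula gives, for any indecomposables $X,Y$,
$$\Ext^1_{\mathcal{R}_n}(X,Y)\;\cong\;D\Hom_{\mathcal{R}_n}(Y,\tau X)\;\cong\;D\Hom_{\mathcal{R}_n}(\tau^{-1}Y,X).$$
Applying this, part~(a) becomes the claim that $\Hom_{\mathcal{R}_n}(Y,[i-1,j-1])\ne 0$ if and only if $Y$ lies in $R_B(i,j)$, and part~(b) becomes the claim that $\Hom_{\mathcal{R}_n}([i+1,j+1],X)\ne 0$ if and only if $X$ lies in $R_F(i,j)$. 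One-dimensionality of $\Ext^1$ will then follow from one-dimensionality of the corresponding $\Hom$ space.

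Next I would use the standardness of $\mathcal{R}_n$, established in the corollary to Proposition~\ref{p:rootcatgamma}: a $\Hom$ space between indecomposables is computed as linear combinations of paths in $\Gamma(n)$ modulo the mesh relations. Since the two arrow types in $\Gamma(n)$ are $[a,b]\to[a,b+1]$ and $[a,b]\to[a+1,b]$, a directed path from $[a,b]$ to $[c,d]$ exists precisely when $[c,d]$ can be reached by moving the two endpoints of the oriented edge forward (clockwise) around $\P_n$. This describes exactly a rectangle in $\Gamma(n)$ (the forward hammock of $[a,b]$), and I would check directly that
$$\Hom_{\mathcal{R}_n}(Y,[i-1,j-1])\ne 0\iff Y\in R_B(i,j),\qquad\Hom_{\mathcal{R}_n}([i+1,j+1],X)\ne 0\iff X\in R_F(i,j)$$
by identifying the four corners of each hammock with the four corners $[j,i],[j,j-1],[i-1,j-1],[i-1,i]$ and $[i+1,j+1],[i+1,i],[j,i],[j,j+1]$ in the statement. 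One-dimensionality is then the standard fact that, in a $\mathbb{Z}A_{n-1}/\tau^n$ stable translation quiver, any two parallel paths between hammock endpoints coincide modulo the mesh relations, so the hammock is one-dimensional.

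Alternatively, and perhaps more cleanly, one can invoke Bongartz's computation of $\Hom$ spaces in the mesh category directly (as the statement suggests), which gives both the shape of the hammock and its one-dimensionality in a single step.

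The main obstacle is the bookkeeping around the two places where $\Gamma(n)$ is not simply $\mathbb{Z}A_{n-1}$ but genuinely involves the $\tau^n$-identification: namely when one of the endpoints of $[i,j]$ passes through the vertex $n$ of $\P_n$, and the boundary cases $j=i\pm 1$. One must verify that the rectangle $R_B(i,j)$ (resp.\ $R_F(i,j)$) still has the stated four corners after interpreting indices cyclically, and that the mesh relations on the boundary of $\Gamma(n)$ (the meshes identified in the proof of Proposition~\ref{p:rootcatgamma}) do not collapse any of these paths to zero. This amounts to a careful but routine case analysis on the position of $[i,j]$ in the fundamental domain used in Figure~\ref{f:gamma4}.
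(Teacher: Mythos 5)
Your proposal is correct and follows essentially the same route as the paper, which justifies the lemma in one line by appealing to Bongartz or to the mesh relations together with the Auslander--Reiten formula in $\mathcal{R}_n$; you simply flesh out that sketch (AR formula to convert $\Ext^1$ into a $\Hom$-hammock, standardness to compute the hammock as a rectangle in $\Gamma(n)$, with the cyclic boundary cases noted). No substantive difference in method.
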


As an example, we show in Figure~\ref{f:extexample} (by underlining)
those indecomposable objects $Y$ such that
$\Ext^1_{\mathcal{R}_5}([3,1],Y)\neq 0$ for the case $n=5$, and
(by overlining) those indecomposable objects $X$ such that
$\Ext^1_{\mathcal{R}_{5}}(X,[3,1])\neq 0$.
In each case the objects in question
are written in bold font.
Note that $[1,3]$ is the only object satisfying both conditions.

Let $X,Y$ be two indecomposable objects of $\mathcal{R}_n$, regarded
as oriented edges between vertices of $\PP_n$. Then $X$ and $Y$ may cross each other
in two different ways. If the tangents to $X,Y$ (in that order) form a pair
of axes corresponding to the usual orientation on $\mathbb{R}^2$ we say that
the crossing of $X$ and $Y$ is positive, otherwise negative. See
Figure~\ref{f:crossingtypes}.

\begin{figure}
\psfragscanon
\psfrag{X}{$X$}
\psfrag{Y}{$Y$}
\subfigure[Positive crossing]{\includegraphics[width=3cm]{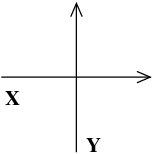}}
\hskip 1cm
\subfigure[Negative crossing]{\includegraphics[width=3cm]{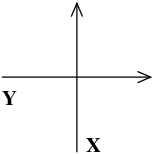}}
\caption{The two types of crossing between $X$ and $Y$.}
\label{f:crossingtypes}
\end{figure}

Lemma~\ref{l:originalextrule} can be reinterpreted geometrically as follows.

\begin{prop} \label{p:extrule}
Let $X,X'$ be indecomposable objects in $\mathcal{R}_n$.
Then $\dim \Ext^1_{\mathcal{R}_n}(X,X')$ is equal to $1$ if and only if one of
the following conditions holds, and is zero otherwise:
\begin{enumerate}
\item[(a)] The crossing of $X,X'$ is positive; 
\item[(b)] The terminal vertex of $X$ coincides with the initial vertex of $X'$ and
$X'$ lies to the left of $X$ in $\PP_n$;
\item[(c)] The initial vertex of $X$ coincides with the terminal vertex of $X'$ and
$X'$ lies to the right of $X$ in $\PP_n$;
\item[(d)] $X'$ is the reverse of $X$.
\end{enumerate}
\end{prop}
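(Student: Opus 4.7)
The plan is to apply Lemma~\ref{l:originalextrule} and then verify, by a direct case analysis on the combinatorial rectangle $R_B(i,j)$, that the objects it contains are precisely those $X'$ satisfying one of (a)--(d) with $X=[i,j]$. Since both parts of Lemma~\ref{l:originalextrule} characterise the same collection of pairs $(X,X')$ with $\dim \Ext^1_{\mathcal{R}_n}(X,X')=1$, verifying either rectangle suffices; I would work through $R_B(i,j)$ and use $R_F(i,j)$ as an internal sanity check.

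The first step is to describe $R_B(i,j)$ explicitly. Because the two arrow types in $\Gamma(n)$ increment the first and second coordinate respectively, the rectangle with corners $[j,i]$, $[j,j-1]$, $[i-1,j-1]$ and $[i-1,i]$ consists of exactly the vertices $[a,b]$ with $a$ in the clockwise arc from $j$ to $i-1$ (inclusive) and $b$ in the clockwise arc from $i$ to $j-1$ (inclusive). Placing $\P_n$ in the plane with vertices labelled clockwise, these arcs are respectively the sets of vertices weakly to the right and weakly to the left of the directed edge $[i,j]$, where "left" is the side one reaches by rotating the tangent to $[i,j]$ ninety degrees counterclockwise.

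The second step is a case analysis on $[a,b]\in R_B(i,j)$, according to whether $a=j$ and/or $b=i$. If $a=j$ and $b=i$ then $[a,b]=[j,i]$ is the reverse of $[i,j]$, giving (d). If $a=j$ and $b\neq i$, the terminal vertex $j$ of $X$ is the initial vertex of $X'=[j,b]$ and $b$ lies strictly on the left arc, giving (b). Symmetrically, if $b=i$ and $a\neq j$, then $X'=[a,i]$ has terminal vertex equal to the initial vertex of $X$ with the other endpoint strictly on the right, giving (c). In the remaining case, $a$ lies on the open right arc and $b$ on the open left arc; the edges $[a,b]$ and $[i,j]$ share no endpoint and the two pairs of endpoints interlace, so the edges cross transversely, and since $[a,b]$ travels from right to left relative to $[i,j]$ the crossing is positive, giving (a). Conversely, each of (a)--(d) forces $a\in\{j,\ldots,i-1\}$ and $b\in\{i,\ldots,j-1\}$, i.e.\ $X'\in R_B(i,j)$, completing the biconditional.

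The one genuine piece of bookkeeping is confirming that "right to left across $[i,j]$" matches the \emph{positive} crossing convention of Figure~\ref{f:crossingtypes}. I would do this once, by explicitly computing tangent vectors at the crossing for a convenient reference diagonal in a standard clockwise regular $n$-gon; rotational symmetry of $\P_n$ then extends the verification to every directed edge $[i,j]$, and the translation of the combinatorial rectangle into conditions (a)--(d) then carries no further content.
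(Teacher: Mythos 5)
Your proposal is correct and follows exactly the route the paper intends: the paper offers no written proof beyond the remark that Proposition~\ref{p:extrule} is a geometric reinterpretation of Lemma~\ref{l:originalextrule}, and your explicit identification of $R_B(i,j)$ with the pairs $(a,b)$, $a\in\{j,\dots,i-1\}$, $b\in\{i,\dots,j-1\}$ (cyclically), followed by the four-way case split on whether $a=j$ and/or $b=i$, is precisely that reinterpretation, matching the worked example for $[3,1]$ in $\mathcal{R}_5$. The one convention you flag for checking --- that crossing from the right side to the left side of $[i,j]$ is the positive crossing --- does come out as you expect under the paper's clockwise labelling.
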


As an example, consider the objects $Y$ such that
$\Ext^1_{\mathcal{R}_5}([3,1],Y)\not=0$, displayed
in Figure~\ref{f:extexample} (by underlining).
Note that the crossing of $[3,1]$ with $[2,5]$ or with $[2,4]$ is positive; 
both $[1,4]$ and $[1,5]$ start at the terminal vertex of $[3,1]$ and
lie to its left; $[2,3]$ has terminal vertex coinciding with the initial
vertex of $[3,1]$ and lies to its right, and $[1,3]$ is the reverse of $[3,1]$.

\begin{figure}
$$\xymatrix@C=0.25cm{
&&& \underline{15} \ar@{--}[rr] \ar[dr] && 21 \ar@{--}[rr] \ar[dr] && 32 \ar@{--}[rr] \ar[dr] && \overline{43} \ar@{--}[rr] \ar[dr] && 54 \ar@{--}[rr] \ar[dr] && \underline{15} \ar@{--}[rr] \ar[dr] && 21\\
&& \underline{14} \ar@{--}[rr] \ar[ur] \ar[dr] && \underline{25} \ar@{--}[rr] \ar[ur] \ar[dr] && 31 \ar@{--}[rr] \ar[ur] \ar[dr] && \overline{42} \ar@{--}[rr] \ar[dr] \ar[ur] && \overline{53} \ar@{--}[rr] \ar[ur] \ar[dr]&& \underline{14} \ar[ur] \ar@{--}[rr] \ar[dr] && \underline{25} \ar[ur] \\
& \underline{\overline{13}} \ar@{--}[rr] \ar[ur] \ar[dr] && \underline{24} \ar@{--}[rr] \ar[ur] \ar[dr] && 35 \ar@{--}[rr] \ar[ur] \ar[dr] && 41 \ar@{--}[rr] \ar[ur] \ar[dr] && \overline{52} \ar@{--}[rr] \ar[ur] \ar[dr] && \underline{\overline{13}} \ar[ur] \ar@{--}[rr] \ar[dr] && \underline{24} \ar[ur] \\
\overline{12} \ar@{--}[rr] \ar[ur] && \underline{23} \ar@{--}[rr] \ar[ur] && 34 \ar@{--}[rr] \ar[ur]  && 45 \ar@{--}[rr] \ar[ur] && 51 \ar@{--}[rr] \ar[ur] && \overline{12} \ar[ur] \ar@{--}[rr] && \underline{23} \ar[ur] \\
}$$
\caption{Objects with non-trivial extensions with $[3,1]$ in either direction, in $\mathcal{R}_5$.}
\label{f:extexample}
\end{figure}

\section{Starting and ending frames} \label{s:frames}

In this section we consider starting and ending frames of indecomposable
objects in $\mathcal{R}_n$ (following~\cite[8.4]{bmrrt}). Let $\ind(\mathcal{R}_n)$
denote the set of (isomorphism classes of) indecomposable objects of
$\mathcal{R}_n$.
Let $X$ be an indecomposable object in $\mathcal{R}_n$. Then the
\emph{starting frame} $S(X)$ of $X$ is the set
$$S(X)=\{Y\in \ind(\mathcal{R}_n)\,:\,\Hom_{\mathcal{R}_n}(X,Y)\neq 0,\,\Ext^1_{\mathcal{R}_n}(Y,X)=0\}.$$
The \emph{ending frame} $E(X)$ of $X$ is the set
$$E(X)=\{Y\in \ind(\mathcal{R}_n)\,:\,\Hom_{\mathcal{R}_n}(Y,X)\neq 0,\,\Ext^1_{\mathcal{R}_n}(X,Y)=0\}.$$
We define the \emph{frame} $F(X)$ of $X$ to be the union:
$$F(X)=S(X)\cup E(X).$$

The following can be checked by direct calculation or using
Proposition~\ref{p:extrule} and the Auslander-Reiten formula.

We also note that, if $\overline{X}$ denotes the reverse
of $X$, then
$$\Hom_{\mathcal{R}_n}(\overline{X},Y) \cong \Hom_{\mathcal{R}_n}(X[1],Y)
\cong \Hom_{\mathcal{R}_n}(X,Y[1]) \cong \Ext_{\mathcal{R}_n}(X,Y)$$
and
$$\Ext_{\mathcal{R}_n}(\overline{X},Y) \cong \Ext_{\mathcal{R}_n}(X[1],Y)
\cong \Ext_{\mathcal{R}_n}(X,Y[-1]) \cong \Hom_{\mathcal{R}_n}(X,Y).$$

\begin{prop} \label{p:framerule}
\begin{enumerate}
\item[(a)] $Y\in S(X)$ if and only if $Y$ and $X$ share a common terminal vertex
and $Y$ lies to the left of $X$, or $Y$ and $X$ share a common initial vertex
and $Y$ lies to the right of $X$, or $Y=X$.
\item[(b)] $Y\in E(X)$ if and only if $Y$ and $X$ share a common initial vertex
and $Y$ lies to the left of $X$, or $Y$ and $X$ share a common terminal vertex
and $Y$ lies to the right of $X$, or $Y=X$.
\item[(c)] $Y\in F(X)$ if and only if $X$ and $Y$ share a common initial
vertex or share a common terminal vertex (or both).
\end{enumerate}
\end{prop}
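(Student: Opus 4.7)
My plan is to reduce both defining conditions of $S(X)$ and $E(X)$ to statements about $\Ext^1$, and then to apply Proposition~\ref{p:extrule} in each case. The reduction uses that $[2]=\id$ in $\mathcal{R}_n$, which gives
$$\Hom_{\mathcal{R}_n}(X,Y)\cong \Hom_{\mathcal{R}_n}(X,Y[2])=\Ext^1_{\mathcal{R}_n}(X,Y[1])$$
and similarly $\Hom_{\mathcal{R}_n}(Y,X)\cong \Ext^1_{\mathcal{R}_n}(Y[1],X)$. Geometrically, $Y[1]$ is the reverse edge of $Y$, by the identification $M_{ij}[1]=M_{ji}$ used in the proof of Proposition~\ref{p:rootcatgamma}; reversing $Y$ swaps its initial and terminal vertices and flips the sign of its crossing with any other oriented edge.

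For part (a), applying Proposition~\ref{p:extrule} to the pair $(X,Y[1])$ shows that $\Hom(X,Y)\neq 0$ iff $X$ and $Y$ cross negatively, or share a common terminal vertex with $Y$ to the left of $X$, or share a common initial vertex with $Y$ to the right of $X$, or $Y=X$. Applying Proposition~\ref{p:extrule} directly to the pair $(Y,X)$, the vanishing $\Ext^1(Y,X)=0$ rules out a positive crossing of $Y$ with $X$ (equivalently, a negative crossing of $X$ with $Y$), as well as the three endpoint configurations of Proposition~\ref{p:extrule}(b)--(d): terminal of $Y$ equal to initial of $X$, initial of $Y$ equal to terminal of $X$, and $X=Y[1]$. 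The crossing cases cancel in the intersection, and for each of the three shared-endpoint cases in the $\Hom$-condition, a short check shows that none of the three endpoint configurations excluded by the $\Ext^1$-condition can simultaneously hold: two distinct oriented edges sharing a common endpoint of one type cannot share their opposite endpoints without becoming equal or each other's reverse. Intersecting therefore yields precisely the description in (a).

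Part (b) follows from the symmetric argument applied to the pairs $(Y[1],X)$ for the $\Hom$-condition and $(X,Y)$ for the $\Ext^1$-condition. Part (c) is then immediate from (a) and (b): the four shared-endpoint configurations listed in those parts, together with $Y=X$, cover exactly the indecomposables sharing a common initial or terminal vertex with $X$. The main obstacle is the case analysis in (a); the bookkeeping requires care because reversing $Y$ interchanges its initial and terminal vertices, but each individual check is elementary.
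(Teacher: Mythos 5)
Your argument is correct and follows the route the paper itself indicates (the paper offers no written proof beyond ``it is easy to see, by direct calculation or using Proposition~\ref{p:extrule} and the Auslander--Reiten formula''): you translate both defining conditions of $S(X)$ and $E(X)$ into $\Ext^1$-statements and apply Proposition~\ref{p:extrule}, merely substituting the $2$-periodicity $\Hom_{\mathcal{R}_n}(X,Y)\cong\Ext^1_{\mathcal{R}_n}(X,Y[1])$ for the Auslander--Reiten formula. The case analysis, including the cancellation of the crossing cases and the left/right reversal needed when the reversed edge is the reference edge in part (b), checks out as you claim.
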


\section{Categorification of the determinantal result}
Fix again a triangulation $\pi$ of $\PP_n$.
For $1\leq i,j\leq n$ with $i\neq j$,
we associate the indecomposable object $[i,j]$
(or oriented edge) with the $i,j$ position in the $n \times n$ matrix
$U(\pi)$ considered in Section~\ref{s:introduction}.
The indecomposable $kQ$-modules
(with $i<j$) correspond to the part of $U(\pi)$ above the leading diagonal
and their shifts (with $i>j$) correspond to the part of $U(\pi)$ below the
leading diagonal.

Reinterpreting Proposition~\ref{p:framerule} in these terms, we obtain:

\begin{lemma} \label{l:matrixframerule}
The frame of an indecomposable object $X$ corresponds to the union of the
row and column of $U(\pi)$ containing $X$ (apart from the diagonal entries).
\end{lemma}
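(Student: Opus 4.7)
The plan is to unwind the definitions and apply Proposition~\ref{p:framerule}(c) directly. Recall that under our fixed indexing, the indecomposable object $X=[i,j]$ is placed at the $(i,j)$ entry of $U(\pi)$. Consequently, the $i$th row of $U(\pi)$ (ignoring the diagonal entry $u_{ii}$) corresponds to the oriented edges $[i,k]$ for $k\neq i$, i.e.\ to the indecomposables sharing the initial vertex $i$ with $X$. Dually, the $j$th column of $U(\pi)$ (ignoring the diagonal entry $u_{jj}$) corresponds to the oriented edges $[k,j]$ for $k\neq j$, i.e.\ to the indecomposables sharing the terminal vertex $j$ with $X$. The entry $[i,j]=X$ itself sits in both, so there is no double-counting issue.

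I would then simply invoke Proposition~\ref{p:framerule}(c), which asserts that $Y\in F(X)$ if and only if $Y$ shares an initial vertex or a terminal vertex with $X$. By the identifications above, this is exactly the condition that $Y$ lies in the $i$th row or the $j$th column of $U(\pi)$, with the diagonal entries removed. This gives both inclusions $F(X)\subseteq (\text{row }i\cup \text{column }j)$ and the reverse, completing the proof.

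There is no serious obstacle here: the statement is a bookkeeping translation of the already-established combinatorial characterisation of $F(X)$ into the row/column language of $U(\pi)$. The only thing to be mildly careful about is confirming that ``sharing a common initial vertex'' matches the row direction (fixed first index) and ``sharing a common terminal vertex'' matches the column direction (fixed second index) under the convention that $[i,j]$ sits at matrix position $(i,j)$; once this convention is recorded, the lemma follows immediately.
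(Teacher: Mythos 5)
Your proof is correct and follows exactly the paper's route: the paper gives no separate argument for this lemma, simply noting that it is a reinterpretation of Proposition~\ref{p:framerule}(c) under the identification of $[i,j]$ with the $(i,j)$ entry of $U(\pi)$, which is precisely the bookkeeping you carry out.
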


Define a \emph{frame-free} configuration of $\mathcal{R}_n$ to be a
maximal collection $C$ of (isomorphism classes of) indecomposable objects of
$\ind\mathcal{R}_n$ such that $Y\not\in F(X)$ for all $X,Y$ in $C$.
Thus frame-free configurations of $\mathcal{R}_n$ correspond to maximal
collections of positions in $U(\pi)$ which do not lie in the same row or column
as each other and contain no diagonal entries.

\begin{lemma} \label{l:framecardinality}
Let $C$ be a frame-free configuration of $\mathcal{R}_n$. Then the
cardinality of $C$ is either $n-1$ or $n$.
\end{lemma}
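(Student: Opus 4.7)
The plan is to argue directly from the matrix reformulation recorded just before the lemma. By Lemma~\ref{l:matrixframerule}, a frame-free configuration $C$ is in bijection with a maximal subset $S$ of off-diagonal positions $\{(i,j) : 1 \leq i,j \leq n,\; i \neq j\}$ in $U(\pi)$ subject to the condition that no two elements of $S$ share a row index or a column index. Equivalently, $S$ is the graph of an injective partial function $\sigma : I \to \{1,\ldots,n\}$ on some $I \subseteq \{1,\ldots,n\}$ satisfying $\sigma(i) \neq i$ for all $i \in I$ (a partial derangement), and $|C| = |S| = |I|$.

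The upper bound $|C| \leq n$ is immediate since $I \subseteq \{1,\ldots,n\}$. For the lower bound $|C| \geq n-1$, I argue by contradiction. Suppose $|S| \leq n-2$, and let $A$ denote the set of row indices not appearing in $S$ and $B$ the set of column indices not appearing in $S$, so that $|A| = |B| = n - |S| \geq 2$. By maximality of $S$, there can be no pair $(i,j) \in A \times B$ with $i \neq j$, since such a pair is off-diagonal and shares no row or column with any existing element of $S$, and so could be adjoined. Hence every $(i,j) \in A \times B$ satisfies $i = j$. Picking any fixed $j \in B$, this forces $i = j$ for every $i \in A$, so $|A| = 1$, contradicting $|A| \geq 2$.

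Combining the two bounds gives $|C| \in \{n-1, n\}$, as desired. I do not expect a genuine obstacle in executing this plan: once Lemma~\ref{l:matrixframerule} supplies the translation into a statement about maximal partial derangements of $\{1,\ldots,n\}$, the cardinality bounds are elementary counting, and the only delicate point is verifying that a set of size at most $n-2$ cannot be maximal, which is what the contradiction above takes care of.
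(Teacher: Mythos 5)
Your proof is correct and follows essentially the same route as the paper: both use the matrix reformulation via Lemma~\ref{l:matrixframerule} and show that if at least two rows and two columns are unused, an off-diagonal position in their intersection could be adjoined, contradicting maximality. Your version of the final counting step (fixing $j\in B$ to force $|A|=1$) is a clean way of making explicit the paper's observation that such an off-diagonal intersection must exist.
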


\begin{proof}
Since frame-free configurations cannot have objects in the same row or
column, the maximum cardinality is $n$. If the cardinality of a configuration
is $n-k$ where $k\geq 2$, there are at least two rows and two columns of
$U(\pi)$ containing no elements of the configuration, a contradiction to
its maximality (as at least two elements could be added to the configuration,
at the non-diagonal intersections of the empty rows and columns). The result
follows.
\end{proof}

Given a fixed-point free permutation, $\sigma$
(sometimes known as a \emph{derangement}),
let $C(\sigma)$ be the set of objects $[i,\sigma(i)]$
for $1\leq i\leq n$. Since $\sigma$ is fixed-point free, it follows from
Proposition~\ref{p:framerule}(c) that $C(\sigma)$ is a frame-free
configuration; it has cardinality $n$. It is clear that this gives a bijection
between fixed-point free permutations and frame-free configurations of
cardinality $n$.

Thus, as a collection of oriented edges between vertices of $\PP_n$, a frame-free
configuration in $\mathcal{R}_n$ of cardinality $n$ is a union of oriented cycles
(with no cycles of cardinality $1$).

Similarly, a permutation $\sigma$ with one fixed point, $p$, say, corresponds
to a configuration of cardinality $n-1$ consisting of the objects
$[i,\sigma(i)]$ for $i\neq p$. This gives a bijection between the
permutations with a single fixed point and the frame-free configurations
of cardinality $n-1$.

The number of permutations with a given number of fixed points is well-known
(see e.g.~\cite[A008290]{sloane}). We thus have the following:

\begin{lemma}
The number of frame-free configurations of $\mathcal{R}_n$ of cardinality
$n$ is $$!n:=n!\sum_{k=0}^n \frac{(-1)^k}{k!},$$
(known as the subfactorial of $n$) while the number of frame-free
configurations of $\mathcal{R}_n$ of cardinality $n-1$ is $!n+(-1)^{n-1}$.
\end{lemma}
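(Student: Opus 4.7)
The plan is to combine the two bijections just constructed with the standard counts of permutations by number of fixed points. The first assertion follows immediately from the bijection $\sigma \mapsto C(\sigma)$ established before the statement: this identifies frame-free configurations of cardinality $n$ with derangements of $\{1,\ldots,n\}$, and the number of derangements is by definition the subfactorial $!n = n!\sum_{k=0}^n (-1)^k/k!$ (for instance by inclusion-exclusion on the events ``$i$ is a fixed point'', or by the reference to OEIS A008290 given in the paper).

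For the second assertion, the analogous bijection identifies frame-free configurations of cardinality $n-1$ with permutations of $\{1,\ldots,n\}$ having exactly one fixed point. Such permutations are counted directly by choosing the fixed point $p$ (in $n$ ways) and then an arbitrary derangement of the remaining $n-1$ symbols (in $!(n-1)$ ways), giving a total of $n\cdot !(n-1)$.

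It then remains to check the numerical identity $n\cdot !(n-1) = {!n} + (-1)^{n-1}$. The plan is to obtain this from the standard derangement recurrence ${!n} = n\cdot !(n-1) + (-1)^n$, which is immediate from the closed-form expression given in the statement:
\[{!n} - n\cdot !(n-1) = n!\sum_{k=0}^{n} \frac{(-1)^k}{k!} - n!\sum_{k=0}^{n-1}\frac{(-1)^k}{k!} = (-1)^n.\]
Since $(-1)^{n-1} = -(-1)^n$, rearranging gives the claim. There is no substantive obstacle here: the content of the lemma is packaged entirely into the two bijections already constructed, together with the closed form for $!n$ appearing in the statement, so the proof reduces to elementary counting together with the recurrence above.
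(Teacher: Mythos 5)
Your proof is correct and follows essentially the same route as the paper: both rely on the bijections (established just before the lemma) between frame-free configurations of cardinality $n$ (resp.\ $n-1$) and derangements (resp.\ permutations with exactly one fixed point), and then invoke the standard counts of such permutations. Your explicit verification of the identity $n\cdot\mathord{!}(n-1)=\mathord{!}n+(-1)^{n-1}$ is a welcome detail that the paper leaves to a reference to the OEIS.
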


Thus the number of frame-free configurations in $\mathcal{R}_n$ of
cardinality $n$ for $n=2,3,\ldots$, is $1,2,9,44,265,1854$ (see~\cite[A000166]{sloane}) and the number of frame-free
configurations in $\mathcal{R}_n$ of cardinality $n-1$ is $0,3,8,45,264,1855$ (see~\cite[A000240]{sloane}).
The $9$ frame-free configurations of $\mathcal{R}_4$ of cardinality $4$
are shown in Figure~\ref{f:exampleconfigs} as collections of vertices in the AR-quiver
(filled in vertices indicate those indecomposable objects in the collection) and as
collections of oriented arcs between vertices of a square.

\begin{figure}
\psfragscanon
\psfrag{1}{$1$}
\psfrag{2}{$2$}
\psfrag{3}{$3$}
\psfrag{4}{$4$}
\psfrag{m1}{$-u_{12}u_{23}u_{34}u_{41}$}
\psfrag{m2}{$-u_{12}u_{24}u_{31}u_{43}$}
\psfrag{m3}{$-u_{13}u_{24}u_{32}u_{41}$}
\psfrag{m4}{$-u_{13}u_{21}u_{34}u_{42}$}
\psfrag{m5}{$-u_{14}u_{23}u_{31}u_{42}$}
\psfrag{m6}{$-u_{14}u_{21}u_{32}u_{43}$}
\psfrag{m7}{$+u_{12}u_{21}u_{34}u_{43}$}
\psfrag{m8}{$+u_{13}u_{24}u_{31}u_{42}$}
\psfrag{m9}{$+u_{14}u_{23}u_{32}u_{41}$}
\includegraphics[width=12cm]{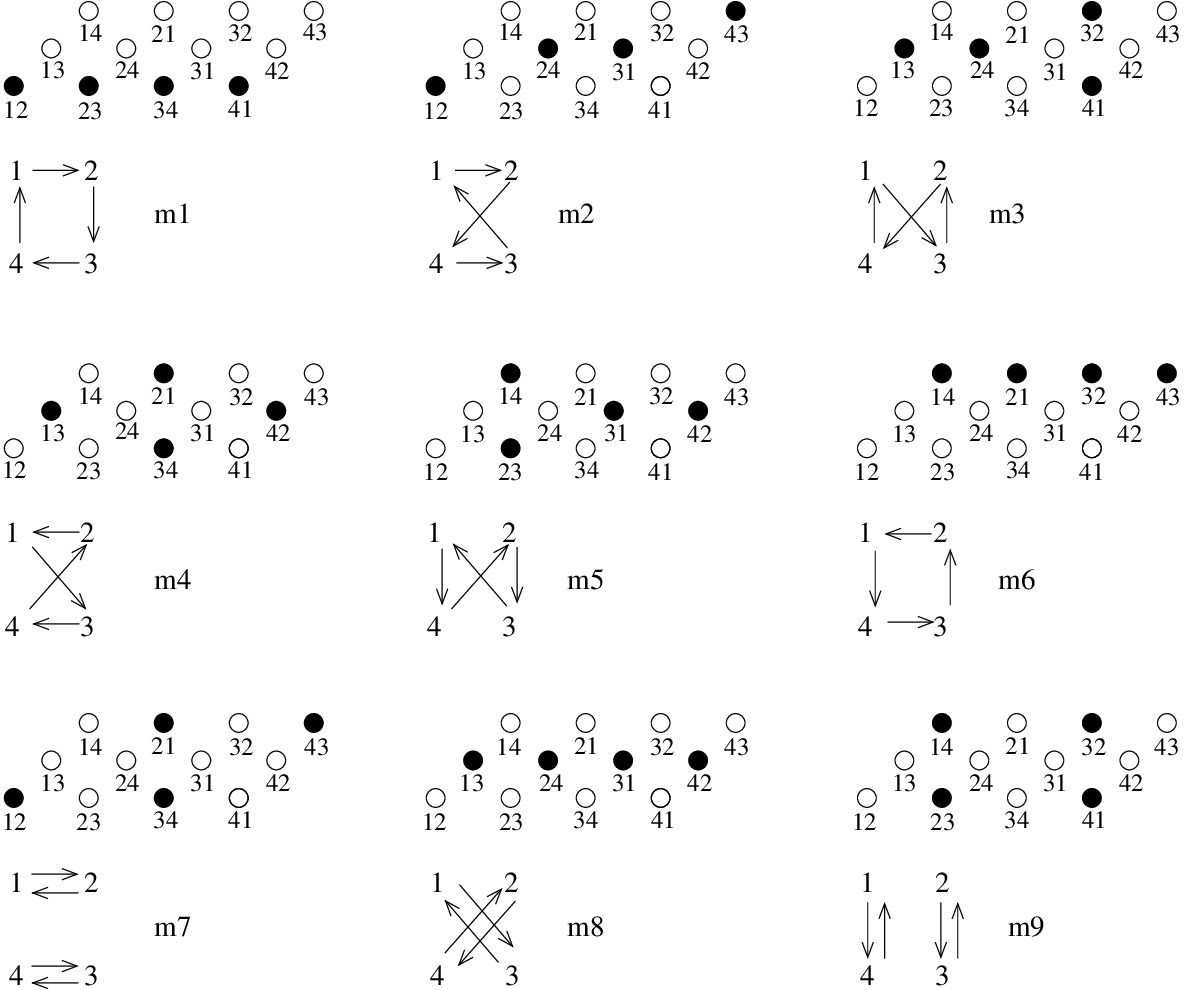}
\caption{The $9$ frame-free configurations of cardinality $4$ in the root category of
type $A_3$ and the corresponding terms in the expansion of $\det(U(\pi))$.}
\label{f:exampleconfigs}
\end{figure}

We see that the frame-free configurations of $\mathcal{R}_n$ of cardinality
$n$ correspond bijectively to the non-zero terms in the expansion
$$\det(U(\pi))=\sum_{\sigma\in\Sigma_{n}} (-1)^{\ell(\sigma)} u_{1,\sigma(1)}\cdots u_{n,\sigma(n)}$$
of the determinant of $U(\pi)$ (since $U(\pi)$ has zeros along its leading diagonal).

Given a frame-free configuration $C$ of $\mathcal{R}_n$ of
cardinality $n$, define its \emph{sign} $\varepsilon(C)$ to be
$$\varepsilon(C)=\prod_{\gamma} (-1)^{\ell(\gamma)-1}$$
where the product is over the oriented cycles in the representation of
$C$ as a collection of oriented edges between vertices of $\PP_n$, and
$\ell(\gamma)$ is equal to the number
of vertices in $\gamma$ for a cycle $\gamma$. It is easy to see that this
is equal to the sign of the corresponding permutation. Set $\alpha(C)$ to
be the product of the entries of $U(\pi)$ corresponding to the elements of $C$.

We therefore have:
$$\det(U(\pi))=\sum_{C} \varepsilon(C)\alpha(C),$$
where the sum is over all frame-free configurations of $\mathcal{R}_n$
of cardinality $n$. The monomials in this expansion are shown for each
frame-free configuration in the example in Figure~\ref{f:exampleconfigs}.

We can reinterpret Theorem~\ref{t:clusterinterpretation}
representation-theoretically as follows. Recall that $\mathcal{A}$
is a cluster algebra of type $A_{n-3}$ with coefficients corresponding to
the boundary edges of $\mathcal{P}_n$.

\begin{theorem} \label{t:detresult2}
Let $n\geq 3$, let $\pi$ be a triangulation of $\PP_n$, and
let $U(\pi)=(u_{ij})$ be the matrix of cluster variables in $\mathcal{A}$
regarded as Laurent polynomials in the $u_{ij}$ for $i,j$ end-points of
diagonals in $\pi$ with coefficients given by polynomials in the stable
variables. Then we have:
$$\sum_{C} \varepsilon(C)\alpha(C)=-(-2)^{n-2}u_{12}u_{23}\cdots u_{n-1,n}u_{n,1},$$
where the sum is over all frame-free configurations of
$\mathcal{R}_n=D^b(kQ)/[2]$ of maximum cardinality.
\end{theorem}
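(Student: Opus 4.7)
The plan is to show that the sum $\sum_C \varepsilon(C)\alpha(C)$ is nothing but the standard expansion of $\det(U(\pi))$, so that the theorem follows immediately from Theorem~\ref{t:detresult}. Thus the work is entirely bookkeeping: matching terms, signs, and the combinatorial indexing set.

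First I would start from the Leibniz expansion
$$\det(U(\pi))=\sum_{\sigma\in\Sigma_n}(-1)^{\ell(\sigma)}u_{1,\sigma(1)}u_{2,\sigma(2)}\cdots u_{n,\sigma(n)}.$$
Since $u_{ii}=0$ (the leading diagonal of $U(\pi)$ is zero), every permutation with a fixed point contributes zero. Hence only derangements $\sigma$ contribute. As already noted in the excerpt, the map $\sigma\mapsto C(\sigma)=\{[i,\sigma(i)]:1\leq i\leq n\}$ gives a bijection between derangements in $\Sigma_n$ and frame-free configurations of $\mathcal{R}_n$ of cardinality $n$ (the maximum by Lemma~\ref{l:framecardinality}). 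Under this bijection, the monomial $u_{1,\sigma(1)}\cdots u_{n,\sigma(n)}$ is exactly the product $\alpha(C(\sigma))$ of matrix entries of $U(\pi)$ indexed by the elements of $C(\sigma)$.

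Next I would check that the sign matches: $(-1)^{\ell(\sigma)}=\varepsilon(C(\sigma))$. This reduces to the standard fact that the sign of a permutation is the product, over its disjoint cycles, of $(-1)^{k-1}$ where $k$ is the length of the cycle. Under the bijection above, the cycles of $\sigma$ correspond precisely to the oriented cycles obtained when $C(\sigma)$ is drawn as a collection of oriented edges between vertices of $\P_n$ (each $i\mapsto\sigma(i)$ gives the edge $[i,\sigma(i)]$, and composing these traces out the cycle). Since fixed points have been excluded, every cycle has length at least $2$, matching the condition built into the definition of $\varepsilon(C)$.

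Combining these two observations, we obtain
$$\det(U(\pi))=\sum_{C}\varepsilon(C)\alpha(C),$$
the sum being over all frame-free configurations of $\mathcal{R}_n$ of cardinality $n$. Invoking Theorem~\ref{t:detresult} to rewrite the left-hand side as $-(-2)^{n-2}u_{12}u_{23}\cdots u_{n-1,n}u_{n,1}$ finishes the proof. There is no serious obstacle here; the only points requiring a moment's care are the vanishing of the diagonal entries (so that the index set can legitimately be restricted from $\Sigma_n$ to derangements, and then to cardinality-$n$ frame-free configurations via the bijection noted in Section~\ref{s:frames}) and the cycle-by-cycle verification of the sign identity.
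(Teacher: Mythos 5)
Your proposal is correct and follows essentially the same route as the paper: identify the cardinality-$n$ frame-free configurations with derangements via $\sigma\mapsto C(\sigma)$, observe that the zero diagonal of $U(\pi)$ kills all other terms of the Leibniz expansion, match $\varepsilon(C(\sigma))$ with the sign of $\sigma$ cycle by cycle, and then invoke Theorem~\ref{t:detresult}. Nothing is missing.
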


\noindent \textbf{Acknowledgements:} We would like to thank the referees for
their helpful and interesting comments. RJM would like to thank Karin Baur
and the FIM at the ETH, Z\"{u}rich for their support and kind hospitality.


\begin{thebibliography}{99}

\bibitem[ARS]{ars}
I. Assem, C. Reutenauer and D. Smith.
\emph{Friezes.}
Adv. Math. \textbf{225} (2010), no. 6, 3134--3165.

\bibitem[BM1]{baurmarsh1}
K. Baur and R. J. Marsh.
\textit{A geometric description of m-cluster categories.}
Trans. Amer. Math. Soc. \textbf{360} (2008), 5789-5803.

\bibitem[BM2]{baurmarsh2}
K. Baur and R. J. Marsh.
\emph{Frieze patterns for punctured discs.}
J. Algebraic Combin. \textbf{30} (2009), no. 3, 349--379.

\bibitem[BR]{bergeronreutenauer}
F. Bergeron and C. Reutenauer,
\emph{$SL_k$-Tilings of the Plane.}
Illinois J. Math. \textbf{54} (2010), no.1, 263--300. 

\bibitem[B]{bongartz}
K. Bongartz.
\emph{Critical simply connected algebras.}
Manuscripta Math. \textbf{46} (1-3) (1984) 117--136.

\bibitem[BCI]{bci}
D. Broline, D. W. Crowe and I. M. Isaacs.
\textit{The geometry of frieze patterns.}
Geom. Ded. \textbf{3} (1974), 171--176.

\bibitem[BMRRT]{bmrrt}
A. B. Buan, R. Marsh, M. Reineke, I. Reiten and G. Todorov,
\textit{Tilting theory and cluster combinatorics}.
Adv. Math. \textbf{204} (2006), no. 2, 572--618.

\bibitem[CaCh]{calderochapoton}
P. Caldero and F. Chapoton.
\textit{Cluster algebras as Hall algebras of quiver representations.}
Commentarii Mathematici Helvetici, \textbf{81}, (2006), 595-616.

\bibitem[CaChS]{ccs}
P. Caldero, F. Chapoton and R. Schiffler.
Quivers with relations arising from clusters ($A_n$ case).
Trans. Amer. Math. Soc. \textbf{358} (2006), no. 3, 1347--1364.

\bibitem[Cox]{coxeter}
H. S. M. Coxeter.
\textit{Frieze patterns.}
Acta Arith. \textbf{XVIII} (1971), 297--310.

\bibitem[CoCox1]{conwaycoxeter1}
J. H. Conway and H. S. M. Coxeter,
\textit{Triangulated polygons and frieze patterns.}
Math. Gaz. \textbf{57} (1973), 87--94.

\bibitem[CoCox2]{conwaycoxeter2}
J. H. Conway and H. S. M. Coxeter,
\textit{Triangulated polygons and frieze patterns.}
Math. Gaz. \textbf{57} (1973), 175--186.

\bibitem[FZ1]{fominzelevinsky1}
S. Fomin and A. Zelevinsky,
\textit{Cluster algebras. I. Foundations.}
J. Amer. Math. Soc. \textbf{15} (2002), no. 2, 497--529.

\bibitem[FZ2]{fominzelevinsky2}
S. Fomin and A. Zelevinsky,
\textit{Cluster algebras II: Finite type classification.}
Invent. Math. \textbf{154} (2003), no. 1, 63--121.

\bibitem[FZ3]{fominzelevinsky3}
S. Fomin and A. Zelevinsky.
\textit{$Y$-systems and generalized associahedra.}
Ann. Math. \textbf{158} (2003), no. 3,  977--1018.

\bibitem[G]{guo}
L. Guo.
\textit{On tropical friezes associated with Dynkin diagrams.}
Preprint arXiv:1201.1805v2 [math.RT], 2012.

\bibitem[H1]{happel1}
D. Happel,
\textit{Tilting sets on cylinders.}
Proc. London Math. Soc. (3) \textbf{51} (1985), no. 1, 21--55.

\bibitem[H2]{happel2}
D. Happel,
\textit{Triangulated categories in the representation theory of
finite-dimensional algebras.}
London Mathematical Society Lecture Note Series, \textbf{119}.
Cambridge University Press, Cambridge, 1988.

\bibitem[K]{keller}
B. Keller,
\textit{On triangulated orbit categories.}
Doc. Math. \textbf{10} (2005), 551--581.

\bibitem[KS]{kellerscherotzke}
B. Keller and S. Scherotzke,
\textit{Linear recurrence relations for cluster variables of affine quivers.}
Adv. Math. \textbf{228}  (2011), no. 3, 1842–-1862.

\bibitem[M-G]{moriergenoud}
S. Morier-Genoud,
\textit{Arithmetics of 2-friezes.}
Preprint arXiv:1109.0917v2 [math.CO], 2011.
To appear in J. Algebraic Combin.

\bibitem[MOT]{mot}
S. Morier-Genoud, V. Ovsienko and S. Tabachnikov,
\textit{$2$-frieze patterns and the cluster structure of the space of polygons.}
Preprint arXiv:1008.3359v2 [math.AG], 2010.
To appear in Annales de l'Institut Fourier.

\bibitem[PX]{pengxiao}
L. Peng and J. Xiao,
\textit{Root categories and simple Lie algebras.}
J. Algebra 198 (1997), no. 1, 19--56. 

\bibitem[P]{propp}
J. Propp.
\textit{The combinatorics of frieze patterns and Markoff numbers.}
Preprint arxiv:math.CO/0511633, November 2005.

\bibitem[S]{sloane}
N. J. A. Sloane, Ed. (2008), The On-Line Encyclopedia of Integer Sequences,
published electronically at
www.research.att.com/$\sim$njas/sequences/.

\end{thebibliography}
\end{document}